\numberwithin{equation}{section}
\def\cb{{\mathcal B}}
\def\cf{{\mathcal F}}
\def\ch{{\mathcal H}}
\def\cp{{\mathcal P}}
\def\cs{{\mathcal S}}
\def\ga{{\mathfrak A}} 
\def\gb{{\mathfrak B}}
\def\bc{{\mathbb C}}
\def\bi{{\mathbb I}}
\def\bd{{\mathbb D}}
\def\be{{\mathbb E}}
\def\bl{{\mathbb L}}
\def\bj{{\mathbb J}}
\def\bm{{\mathbb M}}
\def\bn{{\mathbb N}}
\def\bp{{\mathbb P}}
\def\bt{{\mathbb T}}
\def\bz{{\mathbb Z}}
\def\a{\alpha}
  \def\G{\Gamma}
\def\d{\delta}
\def\l{\lambda}
\def\p{\pi}
\def\r{\rho}
\def\s{\sigma} 
\def\t{\tau}
\def\f{\varphi}  \def\F{\Phi}
\def\th{\theta}  
\def\om{\omega} \def\Om{\Omega}
\newtheorem{thm}{Theorem}[section]
\newtheorem{cor}[thm]{Corollary}
\newtheorem{prop}[thm]{Proposition}
\theoremstyle{definition}
\newtheorem{rem}[thm]{Remark}
\newtheorem{defin}[thm]{Definition}
\def\min{\mathop{\rm min}}
\def\carf{\mathop{\rm CAR}}
\begin{document}
\title[Failure of the Ryll-Nardzewski  theorem on the CAR algebra]
{Failure of the Ryll-Nardzewski  theorem on the CAR algebra}
\author{Vitonofrio Crismale}
\address{Vitonofrio Crismale\\
Dipartimento di Matematica\\
Universit\`{a} degli studi di Bari\\
Via E. Orabona, 4, 70125 Bari, Italy}
\email{\texttt{vitonofrio.crismale@uniba.it}}

\author{Stefano Rossi}
\address{Stefano Rossi\\
Dipartimento di Matematica\\
Universit\`{a} degli studi di Bari\\
Via E. Orabona, 4, 70125 Bari, Italy}
\email{\texttt{stefano.rossi@uniba.it}}

\begin{abstract}

Spreadability of a  sequence of random variables is a distributional symmetry that is implemented by
suitable actions of $\bj_\bz$, the unital semigroup
of strictly increasing maps on $\bz$ with cofinite range.
We show that $\bj_\bz$ is left amenable but not right amenable, although it does admit
a right F{\o}lner sequence.
This enables us to prove that on the CAR algebra $\carf(\bz)$ there exist spreadable
states that fail to be exchangeable. Moreover, we also show that on $\carf(\bz)$ there exist stationary
states that fail to be spreadable.

\vskip0.1cm\noindent \\
{\bf Mathematics Subject Classification}:  60G09, 43A07, 46L53, 81V74\\
{\bf Key words}: CAR algebra, amenable semigroups, F{\o}lner sequences, invariant states, de Finetti's theorem
\end{abstract}

\maketitle
\section{Introduction}
In classical probability exchangeable sequences of random variables are completely understood. First,
by virtue of a general version of de Finetti's theorem exchangeability is equivalent to
 conditional independence and identical distribution
with respect to the tail algebra of the sequence itself, see {\it e.g.} \cite{Ka}.
Second, exchangeability is the same as spreadability, which is the content of a well-known
result due to  Ryll-Nardzewski, \cite{R}. Putting these two statements together, one obtains
what is known as the extended de Finetti theorem, which represents
an accomplished characterization of exchangeability.\\
The equivalences established in the extended de Finetti theorem, though, will in general cease to hold in the wider context
of non-commutative probability, where a variety of novel phenomena may occur.
For instance, the $W^*$-formalism adopted by K\"{o}stler in
\cite{K} yields examples of quantum stochastic processes which
are spreadable while not being exchangeable.
However, concrete models from non-commutative probability in the $C^*$-formalism do exist
where the extended de Finetti theorem continues to hold.
Boolean, monotone, and $q$-deformed (with $|q|<1$) processes are all a case in point, \cite{CrFid, CFL, Fbo, CFG, CR}.
In the class of the concrete settings alluded to above, the case of the CAR algebra certainly stands out for its
relevance to quantum physics. As for exchangeability, virtually everything is known. Indeed, exchangeable (or symmetric)
states on the CAR algebra make up a Choquet simplex whose extreme points are product states of a single even state
on  $2$ by $2$ matrices, see \cite{CFCMP}. Moreover, a state on the CAR algebra  is symmetric if and only if
the corresponding stochastic  process is conditionally independent and identically distributed with respect to its tail algebra, see \cite{CrFid}.
Even so, spreadability for states on the CAR algebra has not been understood fully insofar as
a description of all spreadable states is still missing.
The present paper aims in part to bridge this gap.
In particular, in Theorem \ref{main} we prove that on the
CAR algebra there exist spreadable states  that are not exchangeable, and stationary states that are not spreadable.
Now, spreadable states are the invariant states under the action of the unital semigroup $\bj_\bz$ of
strictly increasing maps of $\bz$ to itself whose range is a cofinite set.
In order to prove existence of spreadable states
with prescribed values on suitable elements of the CAR algebra, it comes in useful to
delve further into the properties of the semigroup $\bj_\bz$
in terms of amenability. In particular, in Theorem \ref{amen} we prove that
$\bj_\bz$ is left amenable despite having exponential
growth, which is shown in Proposition \ref{exp}. In addition, $\bj_\bz$ is not right amenable, although
it has a right F{\o}lner sequence, as proved in Proposition
\ref{Folner}. It is ultimately this circumstance that allows us to obtain
a good supply of spreadable states  with prescribed properties
which prevent them from being exchangeable.
In more detail, states of this type can be obtained by averaging on
the right F{\o}lner sequence a carefully chosen
quasi-free state associated with a positive Toeplitz operator, Proposition
\ref{toeplitz}.\\
Going back to $\bj_\bz$, we would like to stress that  its left amenability
is a result which has an interest in its own. For instance, the semigroup
$\bj_\bz$ is loosely related to the Thompson monoid $F^+$, which has very recently been
associated with a new distributional invariance principle in \cite{KKW}.
More precisely,
$\bj_\bz$ contains a semigroup $\bd_\bz$  such that
$\bj_\bz\cong\bz\,_{\eta}\!\!\ltimes\bd_\bz$ (the semidirect product is with respect to a suitable action $\eta$ of $\bz$) and
$\bd_\bz$ is isomorphic with a quotient of $F^+$, whose amenability
is not known.\\
A few words on the organization of the paper are in order. After setting the notation and recalling
the necessary definitions from $C^*$-dynamical systems and quantum stochastic processes in Section
\ref{prel}, we directly move on to deal with $\bj_\bz$ in Section \ref{JZ}.
In Section \ref{car} the focus is then on spreadable and stationary states on the CAR algebra and its subalgebra
$\mathfrak{C}$ generated by the so-called position operators. The techniques we develop to treat
the CAR algebra work for $\mathfrak{C}$ as well. In particular, on
$\mathfrak{C}$, too,  there exist spreadable states that are not exchangeable, Corollary \ref{spredC}.
There is however a big difference between the two $C^*$-algebras.
In stark contrast with $\carf(\bz)$, which has a great many exchangeable states,
its subalgebra $\mathfrak{C}$ has in fact only the vacuum as such state, Proposition
\ref{onlyvac}.

\section{Preliminaries}\label{prel}


If $\ga$ is a unital $C^*$-algebra, we denote by ${\rm End}(\ga)$ the set of all unital $*$-endomorphisms of $\ga$. This is a unital
semigroup with respect to the map composition.
A $C^*$-dynamical system is a triplet $(\ga, S,\Gamma)$,
where $\ga$ is a unital $C^*$-algebra, $S$ a unital semigroup, and
$\Gamma: S\rightarrow {\rm End}(\ga)$ a unital homomorphism,  namely
$\Gamma_{gh}=\Gamma_g\circ\Gamma_h$ for all
$g, h\in S$. If $S=G$ is a group,  for any $C^*$-dynamical system $(\ga, G, \a)$, the endomorphism $\a_g$ is a $*$-automorphism of
$\ga$ for all $g\in G$.\\
As is commonly done in the literature, $\cs(\ga)$  denotes the weakly-$*$ compact convex set of all states (normalized, positive, linear functionals) on $\ga$.
For any given $(\ga, S,\Gamma)$, we can define the convex subset $\cs_S(\ga)\subset\cs(\ga)$ of those states of
$\ga$ which are invariant under the action $\Gamma$ of $S$ as

$$
\cs_S(\ga):=\big\{\f\in\cs(\ga)\mid\f\circ\G_g=\f\,,\,\,g\in S\big\}\,.
$$
This is a weakly-$*$ compact convex set as it is closed in $\cs(\ga)$. \\

%


We denote by $\bp_\bz$ the group of finite permutations of the set $\bz$.
Its elements are  bijective maps of $\bz$ which only moves finitely many integers.  The group operation
is given by the map composition.\\
We denote by $\bl_\bz$ the unital semigroup of all strictly increasing maps of $\bz$ to itself.
For any fixed $h\in\bz$, the $h$-{right hand-side partial shift} is the element $\theta_h$ of $\bl_\bz$ given by
$$
\theta_h(k):=\left\{\begin{array}{ll}
                      k & \text{if}\,\, k<h\,, \\
                      k+1 & \text{if}\,\, k\geq h\,.
                    \end{array}
                    \right.
$$
Analogously, the $h$-{left hand-side partial shift} is the element $\psi_h$ of $\bl_\bz$ given by
$$
\psi_h(k):=\left\{\begin{array}{ll}
                      k & \text{if}\,\, k>h\,, \\
                      k-1 & \text{if}\,\, k\leq h\,.
                    \end{array}
                    \right.
$$
The unital semigroup generated by all (left and right) partial shifts is denoted by
$\bi_ \bz$. Furthermore, we denote by $\bd_\bz\subset\bi_\bz$ and by $\be_\bz\subset\bi_\bz$ the submonoids generated by all
right and left partial shifts, respectively.
Finally, let $\bj_\bz\subset\bl_\bz$ be the unital semigroup of all strictly increasing
maps $f$ of $\bz$ to itself whose range is cofinite, that is $|\bz\setminus f(\bz)|<\infty$, where for any
set $E$, $|E|$ denotes the cardinality of $E$.
We recall that $\bi_\bz\subsetneq \bj_\bz \subsetneq \bl_\bz$, see \cite{CFG, CFG2}.
In addition, as proved in \cite[Proposition 4]{CFG2}, the monoid  $\bj_\bz$ can also be recovered as a semidirect product. More precisely, one has
$$\bj_\bz\cong\bz\,_{\eta}\!\!\ltimes\bd_\bz\cong\bz\,_{\eta}\!\!\ltimes\be_\bz $$ where
 $\eta_l(\cdot)=\t^l \cdot \t^{-l}$ for every $l\in\bz$, and $\tau:\bz\rightarrow\bz$
is the map $\t(i):=i+1$, $i\in\bz$.\\

A  {\it quantum stochastic process} is a quadruple
$\big(\ga,\ch,\{\iota_j\}_{j\in \bz},\xi\big)$, where $\ga$ is a unital $C^{*}$-algebra, known as the sample
algebra of the process, $\ch$ is a Hilbert space with
inner product $\langle\cdot,\cdot \rangle$ linear in the first variable,
the maps $\iota_j$ are $*$-homomorphisms from $\ga$ to $\cb(\ch)$, and
$\xi\in\ch$ is a unit vector which is cyclic for  the von Neumann algebra
$\bigvee_{j\in \bz}\iota_j(\ga)$ generated by all $\iota_j(\ga)$'s.\\
We next gather the definitions of the distributional
symmetries that a stochastic process may enjoy and we are concerned with.
\begin{defin}
A stochastic process $\big(\ga,\ch,\{\iota_j\}_{j\in \bz},\xi\big)$
is said to be
\begin{itemize}
\item[{\bf-}] {\it stationary} if
$$
\langle\iota_{j_1}(a_1)\cdots\iota_{j_n}(a_n)\xi,\xi\rangle
=\langle\iota_{j_{1}+1}(a_1)\cdots\iota_{j_{n}+1}(a_n)\xi,\xi\rangle\,;
$$

\item[{\bf-}] {\it exchangeable} if for any $\sigma\in\bp_{\bz}$,
\begin{equation*}
\langle\iota_{j_1}(a_1)\cdots\iota_{j_n}(a_n)\xi,\xi\rangle
=\langle\iota_{\sigma(j_{1})}(a_1)\cdots\iota_{\sigma(j_{n})}(a_n)\xi,\xi\rangle\,;
\end{equation*}
\item[{\bf-}] {\it spreadable}
if for any $g\in\bl_\bz$,
\begin{equation*}
\langle\iota_{j_1}(a_1)\cdots\iota_{j_n}(a_n)\xi,\xi\rangle
=\langle\iota_{g(j_{1})}(a_1)\cdots\iota_{g(j_{n})}(a_n)\xi,\xi\rangle\,.
\end{equation*}
\end{itemize}
where the equalities hold true for all $n\in\bn$, $j_1, j_2, \ldots, j_n\in\bz$, and
$a_1, a_2, \ldots, a_n\in\ga$.
\end{defin}

A stochastic process $\big(\ga,\ch,\{\iota_j\}_{j\in \bz},\xi\big)$ can equivalently be assigned through a state $\varphi$ on
the free product $C^*$-algebra $\ast_{\bz} \ga$.
We recall that $\ast_{\bz} \ga$ is the unital $C^*$-algebra uniquely determined up to isomorphism by the following universal property:
there are unital monomorphisms $i_j:\ga\rightarrow \ast_\bz \ga$ such that for any unital $C^*$-algebra $\gb$ and unital morphisms $\F_j:\ga\rightarrow \gb$, $j\in\bz$, there exists a unique unital homomorphism
$\F:\ast_\bz \ga\rightarrow \gb$ such that $\F\circ i_j=\F_j$ for all $j\in\bz$. For an extensive account of the theory of free products we refer the reader to \cite{A}.\\
On the one hand,  with any stochastic process $\big(\ga,\ch,\{\iota_j\}_{j\in \bz},\xi\big)$ it is possible to associate a state
$\varphi$ on the free product $\ast_\bz \ga$  by setting
$$\varphi(i_{j_1}(a_1)i_{j_2}(a_2)\cdots i_{j_n}(a_n)):=\langle\iota_{j_1}(a_1)\iota_{j_2}(a_2)\cdots\iota_{j_n}(a_n)\xi, \xi\rangle\,,$$
for every $n\in\bn$,  and integers $j_1\neq j_2\neq \ldots\neq j_n$ and $a_1, a_2, \ldots , a_n\in\ga$.
On the other hand, all states on the free product $\ast_\bz \ga$ arise in this way,
see  \cite{CrFid, CrFid2}. Indeed, starting with a state $\f\in\cs\big(\ast_{\bz}\ga\big)$, the corresponding stochastic process
is recovered through the GNS representation $(\p_\f, \ch_\f, \xi_\f)$ of
$\f$ by defining, for every $j\in\bz$, $\iota_j(a):=\pi_\f(i_j(a))$, $a\in\ga$.\\

\noindent
Note that corresponding to any map $g:\bz\rightarrow\bz$ by universality there is a $*$-endomorphism
$\a_g$ of $\ast_{\bz} \ga$ uniquely determined by
$\a_g(i_j(a))=i_{g(j)}(a)$, for all $j\in\bz, a\in\ga$. One has that $\a_{fg}=\a_f\circ\a_g$ for all
$f, g$ maps of $\bz$ to itself.
This means in particular  that $\bp_\bz$ and $\bl_\bz$  act on
$\ast_{\bz} \ga$. Finally, $\bz$ naturally acts on $\ast_{\bz} \ga$ as well through the
$*$-automorphism $\a_\t$ corresponding to the map $\tau:\bz\rightarrow\bz$ we defined above; the corresponding
invariant states are  denoted by $\cs_\bz(\ast_{\bz}\ga)$.\\
The submonoids $\bi_\bz, \bj_\bz\subset\bl_\bz$ act on $\ast_{\bz}\ga$ as well by restriction.
For the purposes of the present paper, it is important to recall
that the set equalities $\cs_{\bl_\bz}(\ast_{\bz}\ga)=\cs_{\bj_\bz}(\ast_{\bz}\ga)=\cs_{\bi_\bz}(\ast_{\bz}\ga)$
hold, see \cite[Remark 4]{CFG2}.
In addition,  in general one has $\cs_{\bp_\bz}(\ast_{\bz}\ga)\subseteq\cs_{\bl_\bz}(\ast_{\bz}\ga)\subseteq\cs_\bz(\ast_{\bz}\ga)$, see
\cite[Formula (2.5)]{CFG}.\\
In light of the one-to-one correspondence between stochastic processes  $\big(\ga,\ch,\{\iota_j\}_{j\in \bz},\xi\big)$
and states on the free product $C^*$-algebra $\ast_{\bz} \ga$, we have that a process:\\

\noindent
(i) is {\it spreadable} if and only if the corresponding state belongs to  $\cs_{\bl_\bz}(\ast_{\bz}\ga)$ or, which is the same, to
 $\cs_{\bj_\bz}(\ast_{\bz}\ga)$, and the state itself is then said to be spreadable;

\vspace{0.15cm}
\noindent
(ii) is {\it exchangeable} if and only if the corresponding state  belongs to  $\cs_{\bp_\bz}(\ast_{\bz}\ga)$, and the state itself is then said to be symmetric;

\vspace{0.15cm}
\noindent
(iii) is {\it stationary} or shift-invariant if and only if the corresponding state  belongs to $\cs_\bz(\ast_{\bz}\ga)$, and the state itself is then said to be stationary.

\section{On the amenability of the monoid $\bj_\bz$}\label{JZ}

This section is devoted to a thorough study of the properties of amenability of the monoid
$\bj_\bz$. In an effort to keep the exposition as self-contained as
possible, we start by recalling a couple of definitions to do with amenable semigroups.\\
A discrete semigroup $S$ is said to be left (or right) amenable if there exists a state $\varphi$ on
$\ell^\infty(S)$ such that $\varphi(l_s f)=\varphi(f)$ (or $\varphi(r_s f)=\varphi(f)$), for every $s\in S$ and $f\in\ell^\infty(S)$, where $l_s f(t):=f(st)$  (or $r_s f(t):=f(ts)$), for any $t\in S$.
For convenience we recall that the weakly-$*$ compact convex set of all states of $\ell^\infty(S)$
can easily be identified with the set of all normalized positive finitely additive measures on $(S, \cp(S))$, where
$\cp(S)$ is the $\s$-algebra of all subsets of $S$, see {\it e.g.} \cite{Pat}.
Unlike the case of groups, left amenability and right
amenability are not the same notion.\\
A left (right)  F{\o}lner sequence of a countable discrete semigroup $S$ is a sequence $\{F_n: n\in\bn\}$ of  finite subsets of $S$ such that for any $h\in S$ one has
$$\lim_{n\rightarrow\infty}\frac{|F_n\Delta hF_n|}{|F_n|}=0\quad \left(\lim_{n\rightarrow\infty}\frac{|F_n\Delta F_nh|}{|F_n|}=0\right)$$
where $\Delta$ denotes the symmetric difference between sets, {\it i.e.} $A\Delta B:= (A\cup B)\setminus(A\cap B)$.\\
We start by showing that $\bj_\bz$ has a right F{\o}lner sequence.

\begin{prop}\label{Folner}
The semigroup $\bj_{\bz}$ has a right F{\o}lner sequence.
\end{prop}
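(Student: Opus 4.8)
The plan is to construct an explicit candidate right Følner sequence inside $\bj_\bz$ and verify the defining limit directly. Since $\bj_\bz$ is generated (as a monoid) by the partial shifts, and by the semidirect product decomposition $\bj_\bz\cong\bz\,_\eta\!\ltimes\bd_\bz$ it suffices to check the Følner condition against a generating set of $\bj_\bz$; the natural generators to test are the right partial shifts $\theta_h$, the left partial shifts $\psi_h$, and the shift $\tau$, since every element of $\bj_\bz$ is a finite product of these. The key point is that $f\in\bj_\bz$ is determined by finitely many "defects'': a cofinite-range strictly increasing map is, away from a finite window, an eventual translation $k\mapsto k+c$ on each tail. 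So a reasonable family of finite sets to try is something like
$$
F_n:=\bigl\{f\in\bj_\bz : f \text{ is the identity outside } [-n,n],\ \text{range}(f)\supseteq [-n,n]^c,\ \text{with defect bounded by } n\bigr\},
$$
or, more cleanly, the set of elements obtainable as products of at most $n$ partial shifts whose "action window'' sits inside $[-n,n]$. The idea is that right-multiplying such an $f$ by a fixed generator $h$ perturbs it only near a bounded region, so $fh$ lands back in $F_n$ except for a boundary layer of size $O(1)$ relative to the bulk of size comparable to $|F_n|$.

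**First I would** pin down the exact combinatorial description of the chosen $F_n$ so that $|F_n|$ is computable and grows, and so that right-translation by a generator is \emph{almost} a bijection $F_n\to F_n$. The natural mechanism is: writing $f\in F_n$ in its normal form coming from the semidirect product (a shift power times a finite word in the right partial shifts supported in a window), right multiplication by a fixed $h$ changes the window by a bounded amount and changes the word length by at most the length of $h$. Thus $fh$ fails to lie in $F_n$ only when $f$ already sits at the "edge'' of the allowed window or length budget. One then estimates $|F_n\setminus h^{-1}F_n|$ and $|F_nh\setminus F_n|$ by counting these edge elements and shows the ratio to $|F_n|$ tends to $0$. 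Concretely I would fix $h$, express $|F_n\Delta F_nh|$ as (elements of $F_n$ not of the form $f'h$ with $f'\in F_n$) plus (elements $fh$ with $f\in F_n$ but $fh\notin F_n$), and bound each by the count of elements whose defining data lie within a bounded distance of the boundary of the parameter region defining $F_n$.

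**The hard part will be** arranging the combinatorics so that the boundary count is genuinely lower-order than $|F_n|$ — this is delicate precisely because Proposition~\ref{exp} asserts $\bj_\bz$ has exponential growth, so if $|F_n|$ itself grows only polynomially while the "edge'' grows comparably, the ratio need not vanish; one must choose $F_n$ so that the bulk dominates the boundary exponentially. The safe design is to let the word-length budget $n$ grow while keeping the window of allowed support \emph{fixed} (or growing much more slowly), so that $F_n$ consists of all words of length $\le n$ in a fixed finite alphabet of partial shifts supported in a fixed window; then $|F_n|$ grows exponentially in $n$, whereas right-multiplication by a fixed generator shifts the length by a constant, making $|F_n\Delta F_nh|$ comparable to the shell of words of length within $O(1)$ of $n$ — and for an exponentially growing word-ball in a monoid with a suitable cancellation/normal-form property this shell is a vanishing fraction of the ball. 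I would therefore reduce the whole statement to the assertion that this fixed-alphabet word monoid admits a right Følner sequence given by length-balls, which follows from the normal-form structure of $\bd_\bz$ as a quotient of the Thompson monoid. Verifying that the length-ball shells vanish in relative size is the crux, and I expect it to rest on a unique (or near-unique) normal form for elements of $\bd_\bz$ together with the observation that right multiplication by a generator increases length by exactly one except on a controllably small set.
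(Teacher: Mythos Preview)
Your ``safe design'' has a genuine gap. If $F_n$ consists of words of length $\le n$ in a fixed alphabet of partial shifts, then $F_n\subset\bd_\bz$, and right-multiplication by $\tau$ takes $F_n$ entirely outside $\bd_\bz$ (any $g\in\bd_\bz$ fixes all sufficiently negative integers, whereas $g\tau$ does not), so $F_n\tau\cap F_n=\varnothing$ and the F{\o}lner ratio for $h=\tau$ is at least $1$. You therefore cannot reduce to $\bd_\bz$; the $\tau$-direction must be built into $F_n$ from the start, and letting the window grow ``much more slowly'' does nothing to address this. Your growth heuristic is also inverted: with a fixed alphabet $\{\theta_{-M},\dots,\theta_M\}$ the normal form shows that the number of distinct elements of length $\le n$ is $\binom{n+2M+1}{2M+1}$, polynomial in $n$, not exponential; and for genuinely exponential balls (think of a free monoid) the sphere of words of length exactly $n$ is a \emph{positive} fraction of the ball, not a vanishing one, so that mechanism would fail anyway.

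The paper avoids both pitfalls by a two-scale design with an explicit $\tau$-component:
\[
F_n=\Bigl\{\theta_{-n}^{h_{-n}}\cdots\theta_n^{h_n}\tau^l:\ \textstyle\sum_{i=-n}^n h_i\le n^2,\ |l|\le n\Bigr\},\qquad |F_n|=(2n+1)\tbinom{(n+1)^2}{n^2}.
\]
Right-multiplying by a fixed $f=\theta_{i_1}^{k_1}\cdots\theta_{i_r}^{k_r}\tau^s$ and pushing $\tau^l$ past the $\theta$'s via $\tau^l\theta_m=\theta_{m+l}\tau^l$ produces new $\theta$-indices $i_j+l$, which stay in $[-n,n]$ whenever $|l|\le n-N$ (with $N$ depending only on $f$), raises the total degree by the constant $u=\sum_j k_j$, and shifts the $\tau$-exponent by $s$. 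After reordering with $\theta_k\theta_l=\theta_l\theta_{k-1}$ ($l<k$) the result is again in $F_n$. Hence the only elements of $F_n$ whose right-translate may leave $F_n$ lie in the boundary layer where $|l|>n-N$ or $\sum h_i>n^2-u$, and a direct binomial computation shows this layer has vanishing relative size. The idea you are missing is precisely this: the window must grow linearly so that the $\tau^l$-conjugated indices $i_j+l$ are absorbed, while the degree budget must grow super-linearly (here quadratically) so that the fixed increment $u$ becomes negligible.
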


\begin{proof}
As we recalled in Section \ref{prel}, the semigroup isomorphism $\bj_\bz\cong\bz\,_{\eta}\!\!\ltimes\bd_\bz$ holds, where
\begin{equation}\label{semidirect}
\eta_l(\th_m)=\t^l\th_m\t^{-l}=\theta_{m+l}
\end{equation}
for every $l,m\in\bz$. Thus, any $f\in\bj_\bz$ uniquely determines $s\in\bz$ and $h\in\bd_\bz$ such that $f=h\t^s$. In addition, using the relations $\th_k\th_l=\th_l\th_{k-1}$ when $l<k$, any $h\in\bd_\bz\setminus \{\rm{id}_\bz\}$ can be put in the following form:
$$
h=\th_{h_1}^{p_1}\th_{h_2}^{p_2}\cdots \th_{h_r}^{p_r}\,,
$$
for $r\in\bn$, $h_1<h_2<\cdots <h_r\in \bz$, and $p_1,p_2,\ldots,p_r\in \bn$.\\
The proof is constructive. Indeed, we will show that the sequence
$\{F_n: n\in\bn\}$ with
$$
F_n:=\left\{\theta_{-n}^{h_{-n}}\theta_{-n+1}^{h_{-n+1}}\cdots\theta_0^{h_0}\cdots\theta_{n-1}^{h_{n-1}} \theta_n^{h_n}\tau^l: \,\, \sum_{i=-n}^n h_i\leq n^2,\,\, -n\leq l\leq n\right\}
$$
will do.
To this aim, we start by computing the cardinality of each $F_n$. By \cite[p. 161]{GKP},  we have
\begin{equation}\label{cardfn}
|F_n|
= (2n+1)\sum_{k=0}^{n^2}\binom{2n+1+k-1}{k}
=(2n+1)\binom{n^2+2n+1}{n^2}
\end{equation}
Let now $f$ be a fixed element in $\bj_\bz$. If $f=\theta_{i_1}^{k_1}\theta_{i_2}^{k_2}\cdots\theta_{i_r}^{k_r}\tau^s$, with $i_1<i_2<\cdots< i_r$ and
$k_j\in\bn$ for $j=1, 2, \ldots, r$, let us denote by
$M$ the maximum of the finite set $\{|i_1|, |i_2|, \ldots, |i_r|\}$.
Thanks to \eqref{semidirect}, the product of a generic element of $F_n$ with $f$ on the right takes  the form:
\begin{align*}
&\theta_{-n}^{h_{-n}}\theta_{-n+1}^{h_{-n+1}}\cdots\theta_0^{h_0}\cdots\theta_{n-1}^{h_{n-1}} \theta_n^{h_n}\tau^l\theta_{i_1}^{k_1}\theta_{i_2}^{k_2}\cdots\theta_{i_r}^{k_r}\tau^s\\
=&\theta_{-n}^{h_{-n}}\theta_{-n+1}^{h_{-n+1}}\cdots\theta_0^{h_0}\cdots\theta_{n-1}^{h_{n-1}} \theta_n^{h_n}\theta_{i_1+l}^{k_1}\theta_{i_2+l}^{k_2}\cdots\theta_{i_r+l}^{k_r}   \tau^{l+s}\, .
\end{align*}
Now if $|l|\leq n-N$ (for $n$ big enough) where $N:=\max\{|s|, M\}$ ($N$ does not depend on $n$), the above word is  seen to be still in $F_n$ provided that $q+u\leq n^2$, where
${\displaystyle}q=\sum_{i=-n}^n h_i$ and ${\displaystyle}u:=\sum_{j=1}^r k_j$. In more detail, if we define
$n_0:=\max_{|j|\leq n}\{j: h_j\neq 0\}$, the right-hand side of the above equality can be rewritten as
$$\theta_{-n}^{h_{-n}}\theta_{-n+1}^{h_{-n+1}}\cdots\theta_{n_0}^{h_{n_0}}\theta_{i_1+l}^{k_1}\theta_{i_2+l}^{k_2}\cdots\theta_{i_r+l}^{k_r}   \tau^{l+s}\,.$$
Now if $i_1+l\geq n_0$, the word is seen at once to sit in $F_n$ thanks to the conditions imposed on the indices $l$ and
$q$. If $i_1+l<n_0$, then  by virtue of the commutation rules $\th_k\th_l=\th_l\th_{k-1},\, \textrm{for all}\,\, l<k$, there exists $j\in\{1, 2, \ldots, r\}$ such that
our word rewrites as
$$\theta_{-n}^{h_{-n}}\theta_{-n+1}^{h_{-n+1}}\cdots\theta_{i_1+l}^{k_1}\cdots\theta_{i_j+l}^{k_j}\theta_{n_0-j}^{h_{n_0}}\theta_{i_{j+1}+l}^{k_{j+1}}\cdots  \theta_{i_r+l}^{k_r}   \tau^{l+s}\,$$
where $i_{1}+l<\cdots<i_{j}+l\leq n_0-j\leq i_{j+1}+l<\cdots<i_r+l$. By iterating
this procedure as many times as necessary, one ends up with an ordered word which lies in $F_n$
thanks to the constraints on $l$ and $q$.\\
But then we have the inequality
$$|F_n\cap F_nf|\geq {\displaystyle}(2n-2N +1)\sum_{k=0}^{n^2-u}\binom{2n+k}{k}. $$
Therefore, by \eqref{cardfn} we have:
\begin{equation*}
 \frac{|F_n\cap F_n f|}{|F_n|}\geq \frac{2n-2N+1}{2n+1}\,\,\frac{\sum_{k=0}^{n^2-u}\binom{2n+k}{k}}{\binom{n^2+2n+1}{n^2}}\, .
\end{equation*}
Now, the limit of $\frac{2n-2N+1}{2n+1}$ for $n\rightarrow\infty$ is clearly $1$ as is
the limit of ${\displaystyle}\frac{\sum_{k=0}^{n^2-u}\binom{2n+k}{k}}{\binom{n^2+2n+1}{n^2}}
$.
This can be seen by showing that
$$
\lim_{n\rightarrow\infty}\frac{\sum_{k=n^2-u+1}^{n^2}  \binom{2n+k}{k}}{\binom{n^2+2n+1}{n^2}}  
=0\, .
$$
Since the function mapping $k$ to $\binom{2n+k}{k}$ is increasing, the expression above can be bounded in the following way:
\begin{equation*}
\frac{\sum_{k=n^2-u+1}^{n^2}  \binom{2n+k}{k} } {\binom{n^2+2n+1}{n^2}} 
 \leq u\binom{2n+n^2}{n^2} \frac{(n^2)!(2n+1)!}{(n^2+2n+1)!}= u\frac{2n+1}{(n+1)^2}\,,
\end{equation*}
and the thesis follows.
\end{proof}
\begin{rem}\label{leftFolner}
The semigroup $\bj_{\bz}$ also has a left  F{\o}lner sequence. More precisely, if we set 
$$
G_n:=\left\{\tau^l\theta_{-n}^{h_{-n}}\theta_{-n+1}^{h_{-n+1}}\cdots\theta_0^{h_0}\cdots\theta_{n-1}^{h_{n-1}} \theta_n^{h_n}: \,\, \sum_{i=-n}^n h_i\leq n^2,\,\, -n\leq l\leq n\right\}\,,
$$
it is not too hard to verify that for any $f=\tau^s\theta_{i_1}^{k_1}\theta_{i_2}^{k_2}\cdots\theta_{i_r}^{k_r}  \in\bj_{\bz}$ one has
$$\lim_n \frac{|fG_n\cap G_n|}{|G_n|}=1\,. $$
This can  be seen much in the same way as in the proof above, replacing the $n_0$ appearing in that proof with
$\min_{|j|\leq n}\{j: h_j\neq 0\}$.

\end{rem}

In order to prove the left amenability of $\bj_\bz$, we first need to recall
some facts. First, any left-cancellative semigroup $S$ ({\it i.e.} given $s, t, t'\in S$ such that $st=st'$, then $t=t'$)
which admits a left F{\o}lner sequence is left amenable, as proved by Namioka, see \cite[Corollary 4.3]{Tapioka}.
We will also make use of a notion from  semigroup theory which amounts to a weak form of left cancellativity. This is the so-called Klawe condition, \cite{Klawe}:
a semigroup $S$ satisfies the Klawe condition if for any $f, g, s\in S$  the equality $sf=sg$  implies that
there exists $t\in S$ such that  $ft= gt$.

\begin{thm}\label{amen}
The monoid $\bj_\bz$ is left amenable but not right amenable.
\end{thm}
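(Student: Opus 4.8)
The plan is to treat the two assertions separately, obtaining left amenability from the Følner machinery already set up and isolating the failure of right amenability as the genuinely delicate point. For left amenability the quickest route is Namioka's theorem \cite[Corollary 4.3]{Tapioka}. Every element of $\bj_\bz$ is a strictly increasing, hence injective, self-map of $\bz$, so $\bj_\bz$ is left-cancellative: if $sf=sg$, that is $s\circ f=s\circ g$, then injectivity of $s$ forces $f(k)=g(k)$ for all $k$, i.e. $f=g$. By Remark \ref{leftFolner} the monoid also carries a left Følner sequence, so Namioka's result applies directly and produces a left-invariant mean on $\ell^\infty(\bj_\bz)$. (Left-cancellativity trivially entails the Klawe condition $sf=sg\Rightarrow\exists\,t:\ ft=gt$, so one may equally invoke the Klawe-type strengthening of Namioka's theorem, which is the more robust formulation as it consumes only a weak remnant of cancellation.) Crucially, the right Følner sequence of Proposition \ref{Folner} cannot be run through the same scheme on the right, for the cancellation it would require is simply not there.

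This is exactly the heart of the second assertion. The defining relation $\theta_k\theta_l=\theta_l\theta_{k-1}$ (valid for $l<k$) gives, for $k=1$, $l=0$,
\[
\theta_1\theta_0=\theta_0\theta_0,\qquad\text{with } \theta_1\neq\theta_0,
\]
so right multiplication by $\theta_0$ is not injective; $\bj_\bz$ is not right-cancellative. The strategy I would follow is to assume a right-invariant mean $m$ on $\ell^\infty(\bj_\bz)$ exists and to exploit this defect. A first free consequence is that $m(\bj_\bz g)=1$ for every $g\in\bj_\bz$, since $r_g\mathbf{1}_{\bj_\bz g}\equiv 1$ (one always has $tg\in\bj_\bz g$); dually, the complements $\bj_\bz\setminus\bj_\bz\theta_m$, which describe the maps having ``no gap before position $m$'', are $m$-null for every $m\in\bz$, while each fixed element of $\bj_\bz$ lies in only finitely many of the sets $\bj_\bz\theta_m$. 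The task is then to convert this finite-multiplicity phenomenon into a genuine violation of right invariance, ideally in the uniform Day-type form $\sum_i\big(r_{s_i}f_i-f_i\big)\geq\varepsilon>0$, which is impossible for an invariant mean.

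I expect this final upgrade to be the main obstacle, precisely because two routine shortcuts are blocked. First, $\bj_\bz$ is left-reversible — any two elements admit a common left multiple, as one checks by aligning the images of the two increasing maps — so the crude paradoxical decomposition into disjoint principal left ideals is unavailable, consistent with the coexistence of a right Følner sequence. Second, the only evident homomorphic images of $\bj_\bz$ are amenable: both the $\bz$-part $f=h\tau^s\mapsto s$ and the co-rank $f\mapsto|\bz\setminus f(\bz)|$ are homomorphisms onto amenable semigroups, and the normal factor $\bd_\bz$ is a quotient of the Thompson monoid, whose amenability is open; hence non-right-amenability cannot be imported from a simpler quotient. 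The obstruction must therefore be extracted from the $\bz$-action \eqref{semidirect} that shifts the indices of the $\theta_m$ and so destroys right-cancellativity, and I would organise the decisive argument around Klawe's analysis of amenability for semidirect products \cite{Klawe}, using $\bj_\bz\cong\bz\,_{\eta}\!\!\ltimes\bd_\bz$ to localise the failure to this action.
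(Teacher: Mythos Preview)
Your treatment of left amenability is exactly the paper's: left cancellativity (from injectivity of increasing maps) together with the left F{\o}lner sequence of Remark \ref{leftFolner}, fed into Namioka's theorem. Nothing to add there.

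The non-right-amenability half, however, is not a proof as it stands: you isolate the right counterexample $\theta_1\theta_0=\theta_0\theta_0$ with $\theta_1\neq\theta_0$, but then you only sketch a programme (Day-type inequality, finite multiplicity of the principal left ideals $\bj_\bz\theta_m$, Klawe's semidirect-product analysis) and explicitly flag the ``final upgrade'' as the main obstacle. None of those threads is carried to a contradiction, and the semidirect-product route through \cite{Klawe} is not obviously conclusive here, since both factors $\bz$ and $\bd_\bz$ are themselves left amenable and the action is benign on each factor separately.

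The paper closes this gap with a short trick you do not use. It passes to the opposite monoid: right amenability of $\bj_\bz$ is the same as left amenability of $\bj_\bz^{\rm op}$, and your right F{\o}lner sequence from Proposition \ref{Folner} becomes a \emph{left} F{\o}lner sequence for $\bj_\bz^{\rm op}$. Now one invokes \cite[Proposition 2.5]{Gray}: a semigroup that is left amenable and possesses a left F{\o}lner sequence must satisfy the Klawe condition. For $\bj_\bz^{\rm op}$ the Klawe condition reads: whenever $fs=gs$ in $\bj_\bz$ there exists $t$ with $tf=tg$, and left cancellativity of $\bj_\bz$ then forces $f=g$. Your very identity $\theta_1\theta_0=\theta_0\theta_0$ with $\theta_1\neq\theta_0$ (equivalently, the paper's $\theta_j\theta_{j-1}=\theta_{j-1}\theta_{j-1}$) violates this, so $\bj_\bz^{\rm op}$ is not left amenable and $\bj_\bz$ is not right amenable. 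In other words, the missing idea is precisely to run the Klawe criterion on the \emph{opposite} semigroup via the Gray--Kambites result, rather than to attempt a direct paradoxical or Day-type construction inside $\bj_\bz$.
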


\begin{proof}
Left amenability follows from Remark \ref{leftFolner} and left cancellativity thanks to the result of Namioka
we recalled above.\\
In order to prove that $\bj_\bz$ fails to be right amenable, we will argue by contradiction.
If $\bj_\bz$  were right amenable, then its opposite semigroup $\bj_\bz^{{\rm op}}$ would be left amenable
(we recall that, as a set, $\bj_\bz^{{\rm op}}$ is just $\bj_\bz$ with the new product
$f\cdot^{{\rm op}}g:=gf$, for any $f,g\in \bj_\bz^{{\rm op}}$ ).
Now $\bj_\bz^{{\rm op}}$ has a left F{\o}lner sequence by Proposition \ref{Folner}.
By applying \cite[Proposition 2.5]{Gray}, we would find that $\bj_\bz^{{\rm op}}$ would satisfy the Klawe
condition, which in this case reads as follows.  For any $f, g, s\in\bj_\bz$ the equality $fs=gs$ implies that
there exists $t\in\bj_\bz$ such that  $tf=tg$, that is $f=g$ by left cancellativity of $\bj_\bz$.
But this does not hold true, as is seen by taking
$f=\theta_j$ and $g=s=\theta_{j-1}$, $j\in\bz$, and recalling that $\theta_k\theta_l=\theta_l\theta_{k-1}$ when
$l<k$.
\end{proof}

The next step to accomplish our analysis of  $\bj_\bz$ is to show  it has exponential growth.
This is a result worth stressing because amenability is very often inferred from
subexponential growth, which is of course only a sufficient condition.
More precisely, Theorem 4.4 in \cite{Gray} shows that left amenability
follows from assuming subexponential growth and the Klawe condition.\\
Going back to $\bj_\bz$, we first need to point out
that owing to the relations $\theta_l=\tau^l \theta_0 \tau^{-l}$, $l\in\bz$,  the monoid $\bj_\bz$ is actually finitely generated, with $\tau, \tau^{-1}, \theta_0$ being its generators. We recall that the growth function $f: \bn\rightarrow\bn$ of a finitely
generated (semi)group at $n$ is just the number of different words of length $n$ in the given generators. Thus, in our case we have $f(n)\leq 3^n$, $n\in\bn$. We next aim to show that, however, $f(n)\geq C a^n$, $n\in\bn$, for some real constants
$C$ and $a$ with $a>1$.

\begin{prop}\label{exp}
The monoid $\bj_\bz$ has exponential growth.
\end{prop}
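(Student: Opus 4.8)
The plan is to produce an exponentially growing family of \emph{distinct} elements of $\bj_\bz$, each representable by a word of length linear in a parameter $k$, and then read off the lower bound on the growth function $f$. Since it is already recorded that $f(n)\le 3^n$, it suffices to exhibit constants $C>0$ and $a>1$ with $f(n)\ge Ca^n$.

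First, for every subset $S=\{i_1<i_2<\cdots<i_m\}\subseteq\{0,1,\dots,k-1\}$ I would set
\[
g_S:=\theta_{i_1}\theta_{i_2}\cdots\theta_{i_m}\in\bd_\bz\subset\bj_\bz\,,
\]
with $g_\emptyset:=\mathrm{id}_\bz$; this produces $2^k$ elements. The key point is that they are pairwise distinct, which I would establish by computing each $g_S$ as a concrete map of $\bz$. Using that $\theta_a$ is injective with range $\bz\setminus\{a\}$ and satisfies $\theta_a(v)=v+1$ for $v\ge a$, a short induction on $m$ gives
\[
\bz\setminus g_S(\bz)=\{\,i_1,\;i_2+1,\;i_3+2,\;\dots,\;i_m+(m-1)\,\}\,.
\]
These $m$ values are strictly increasing, so the range complement of $g_S$ has exactly $|S|$ elements; listing them in increasing order as $m_1<\cdots<m_m$ one recovers $i_j=m_j-(j-1)$. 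Hence $S$ is determined by the map $g_S$, and distinct subsets give distinct elements of $\bj_\bz$.

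Next I would bound the word length of $g_S$ in the generators $\tau,\tau^{-1},\theta_0$. Telescoping the relations $\theta_i=\tau^i\theta_0\tau^{-i}$ yields
\[
g_S=\tau^{i_1}\theta_0\,\tau^{i_2-i_1}\theta_0\cdots\tau^{i_m-i_{m-1}}\theta_0\,\tau^{-i_m}\,,
\]
whose length is $2i_m+m\le 2(k-1)+k=3k-2$. Thus at least $2^k$ distinct elements of $\bj_\bz$ have word length at most $3k$, so with $k=\lfloor n/3\rfloor$ one obtains $f(n)\ge 2^{\lfloor n/3\rfloor}\ge C\,(2^{1/3})^{\,n}$ for a suitable $C>0$, and $2^{1/3}>1$ gives exponential growth. (If $f$ is read as counting elements reachable by words of length \emph{exactly} $n$, the same conclusion follows by padding with factors $\tau\tau^{-1}$: since the minimal length $2i_m+m$ has the parity of $|S|$, the $2^{k-1}$ subsets whose cardinality matches the parity of $n$ are each reachable at length exactly $n$ once $n\ge 3k$, still leaving exponentially many.)

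I expect the main obstacle to be precisely the distinctness of the $g_S$: the relation $\theta_k\theta_l=\theta_l\theta_{k-1}$ for $l<k$ means that counting words overcounts, so one genuinely needs a representation-independent invariant. Computing the range complement as above supplies the cleanest such invariant; alternatively one could invoke uniqueness of the normal form $\theta_{h_1}^{p_1}\cdots\theta_{h_r}^{p_r}$ in $\bd_\bz$, but verifying distinctness directly on $\bz$ keeps the argument self-contained.
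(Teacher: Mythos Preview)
Your argument is correct and follows the same strategy as the paper's: exhibit exponentially many distinct elements of $\bd_\bz$ that can be written as words of linearly bounded length in $\tau,\tau^{-1},\theta_0$. The execution differs in two respects worth noting. First, the paper chooses the family $A_n=\{\theta_{-n}^{h_{-n}}\cdots\theta_n^{h_n}:h_i\ge 1,\ \sum h_i=3n+1\}$, counts $|A_n|=\binom{3n}{n}$ via Stirling, and observes that each element has a word of length about $7n$, yielding the base $\sqrt[7]{6}$; your family of products $\theta_{i_1}\cdots\theta_{i_m}$ indexed by subsets of $\{0,\dots,k-1\}$ gives the smaller base $2^{1/3}$ but with no asymptotics needed. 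Second, the paper's cardinality computations tacitly rely on the uniqueness of the normal form $\theta_{h_1}^{p_1}\cdots\theta_{h_r}^{p_r}$ in $\bd_\bz$ (stated but not proved there), whereas your range-complement calculation $\bz\setminus g_S(\bz)=\{i_1,i_2+1,\dots,i_m+(m-1)\}$ supplies a self-contained invariant distinguishing the $g_S$; this is the cleaner way to handle the relations $\theta_k\theta_l=\theta_l\theta_{k-1}$ and is a genuine improvement in presentation. Your parenthetical on the ``exactly $n$'' reading of $f$ via parity and $\tau\tau^{-1}$-padding is also correct.
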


\begin{proof}
We start by defining the sequence of sets
$$
A_n:=\bigg\{\theta_{-n}^{h_{-n}}\theta_{-n+1}^{h_{-n+1}}\cdots\theta_0^{h_0}\cdots\theta_{n-1}^{h_{n-1}} \theta_n^{h_n}:\,\, h_i\geq 1,\, \sum_{i=-n}^n h_i = 3n+1\bigg\}\,.
$$
If we set $k_i:=h_i-1$, $-n\leq i\leq n$, we have $k_i\geq 0$ and $\sum_{i=-n}^n k_i=n$. Therefore,
the cardinality of each $A_n$ is given by
$$|A_n|=\binom{2n+1+n-1}{n}=\binom{3n}{n}=\frac{(3n)!}{n!(2n)!}$$
Using Stirling's approximation of the factorial, $|A_n|$ is then seen to satisfy the asymptotic relation
$|A_n|= O((\frac{27}{4})^n\frac{1}{\sqrt{n}})$.\\
\noindent
If we rewrite the elements of $A_n$ in terms of the generators $\tau, \tau^{-1}, \theta_0$, after the due simplifications we obtain words of the form
$$\tau^{-n}\theta_0^{h_{-n}}\tau\theta_0^{h_{-n+1}}\tau\cdots \tau\theta_0^{h_0}\tau\theta_0^{h_1}\tau\theta_0^{h_2}\cdots \tau\theta_0^{h_n}\tau^{-n}.$$
Now the length of such a word is $7n$.
Phrased differently, the set of all words of length equal to $7n$ in the generators contains the set
$A_n$. Therefore, when $n$ is big enough, we must have
$$f(7n)\geq |A_n|=  O\left(\left(\frac{27}{4}\right)^n\frac{1}{\sqrt{n}}\right)\geq C 6^n
$$
for some constant $C>0$.
 In particular, we find the inequality $f(n)\geq C (\sqrt[7]{6})^n$ for $n$ sufficiently large.
\end{proof}

\section{Stationary and spreadable states on the CAR algebra}
\label {car}

The {\it Canonical Anticommutation
Relations} (CAR for short) algebra over  $\bz$ is the universal unital
$C^{*}$-algebra $\carf(\bz)$,  with unit $I$, generated by
the set $\{a_j, a^{\dagger}_j: j\in \bz\}$ ({\it i.e.} the Fermi
annihilators and creators respectively),  satistying the relations
\begin{equation}\label{stara}
(a_{j})^{*}=a^{\dagger}_{j}\,,\,\,\{a^{\dagger}_{j},a_{k}\}=\d_{j, k}I\,,\,\,
\{a_{j},a_{k}\}=\{a^{\dagger}_{j},a^{\dagger}_{k}\}=0\,,\,\,j,k\in \bz\,.
\end{equation}
where $\{\cdot, \cdot\}$ is the anticommutator and  $\d_{j, k}$ is the Kronecker
symbol.\\
Note that by definition
\begin{equation*}
\carf(\bz)=\overline{\carf{}_0(\bz)}\,,
\end{equation*}
where
$$
\carf{}_0(\bz):=\bigcup\{\carf(F): F\subset \bz\,\text{finite}\,\}
$$
is the (dense) subalgebra of the {\it localized elements}, and
$\carf(F)$ is the $C^*$-subalgebra
generated by the finite set $\{a_j, a^\dag_j: j\in F\}$.\\
$\carf(\bz)$ is a $\bz_2$-graded algebra. The grading is induced by the parity automorphism $\Theta$ acting on the generators as
$$
\Theta(a_{j})=-a_{j}\,,\,\,\Theta(a^{\dagger}_{j})=-a^{\dagger}_{j}\,,\quad
j\in \bz\,.
$$
Consequently, the CAR algebra decomposes as $\carf(\bz)=\carf(\bz)_{+} \oplus\carf(\bz )_{-}$, where
\begin{align*}
&\carf(\bz)_{+}:=\{a\in\carf(\bz) \ | \ \Theta(a)=a\}\,,\\
&\carf(\bz)_{-}:=\{a\in\carf(\bz) \ | \ \Theta(a)=-a\}\,.
\end{align*}
Elements  in $\carf(\bz)_+$ and in $\carf(\bz)_-$ are called
{\it even} and {\it odd}, respectively.\\
A state $\varphi$ on $\carf(\bz)$ is said to be {\it even}
if $\varphi\circ\Theta=\varphi$, which is the same as $\f\lceil_{\carf(\bz)_{-}}=0$.\\

The $C^*$-algebra $\carf(\bz)$ has a distinguished (faithful) irreducible
representation on the Fermi Fock space $\cf_-(\ell^2(\bz))$. In this representation, for every
$j\in\bz$, the operator $a_j^\dag$ (or $a_j$) acts as
the Fermi creator  (or annihilator) of a particle in the state $e_j$,
where $\{e_j:j\in\bz\}$ is the canonical orthonormal basis of
$\ell^2(\bz)$. For an exhaustive account of the Fermi Fock space the reader is
referred to Chapter 5.2 of \cite{BR2}.
The vector state associated with the Fock vacuum vector $\Om\in\cf_-(\ell^2(\bz))$ ({\it i.e.} the one corresponding to the state with no particles at all) is called the vacuum state.\\

The CAR algebra $\carf(\bz)$ is isomorphic to the
$C^{*}$--infinite
tensor product of $\bm_{2}(\bc)$ with itself:
\begin{equation*}
\label{jkw}
\carf(\bz)\cong\overline{\bigotimes_{\bz}\bm_{2}(\bc)}^{C^*}\,,
\end{equation*}
via a Jordan--Klein--Wigner
transformation (see \cite{T3}, Exercise XIV).
Moreover, in Example 3.2 of \cite{CRZbis} the CAR algebra is also shown to be isomorphic
with the infinite graded tensor product of $(\bm_2(\bc), {\rm ad}(U))$ with itself, where
$\bm_2(\bc)$ is understood as a $\bz_2$-graded $C^*$-algebra with grading being induced by
the adjoint action of the unitary (Pauli) matrix $U:=  \left(
\begin{array}{rr}
1 & 0 \\
0 &{ -1}
\end{array}
\right)$.\\
It is worth recalling that the vacuum state can also be obtained
as an infinite product in the sense of Araki-Moriya, see \cite{AM1}, of a particular
even state on $\bm_2(\bc)$. More precisely, by Theorem 5.3 in \cite{CFCMP} any extreme
symmetric state on $\carf(\bz)$ is of the form $\times_\bz \r_\l$ for some $0\leq\l\leq 1$, where
$\r_\l$ is the state on $\bm_2(\bc)$ given by
$\r_\l(T):={\rm Tr}(TD_\l),\, T\in\bm_2(\bc)$
and  $D_\l:=  \left(
\begin{array}{cc}
\l & 0 \\
0 &{ 1-\l}
\end{array}\right)$. The vacuum state is the one corresponding to
$\l=1$.\\
Finally, the CAR algebra can also be seen as a quotient of the free product
$\ast_\bz \bm_2(\bc)$. Indeed, if we define $A:=  \left(
\begin{array}{rr}
0 & 1 \\
0 &0
\end{array}
\right)$, it is easy to see that  the quotient of $\ast_\bz \bm_2(\bc)$
modulo the relations $\{i_j(A^*), i_k(A)\}=\d_{j, k}I$
and $\{i_j(A),i_k(A)\}=\{i_j(A^*),i_k(A^*)\}=0$, for
all $j, k\in\bz$, is isomorphic with the CAR algebra by \eqref{stara}.\\

Note that $\bz, \bp_\bz, \bj_\bz$ act naturally on $\carf(\bz)$ by displacing the indices of the generators
according to the given map of $\bz$. These actions obviously come from
the action at the level of free product we introduced towards the end
of Section \ref{prel}. Therefore, studying invariant Fermi stochastic
processes is the same as analyzing the invariant states of $\carf(\bz)$
under the corresponding action. Also note that exchangeable states are
automatically spreadable and spreadable states are of course stationary.

\medskip

As we recalled in the introduction, de Finetti's theorem
provides quite a satisfactory description of exchangeable
random variables as those which are
conditionally independent and identically
distributed  w.r.t. the tail algebra. This characterization continues to hold true for the CAR algebra, as shown in  \cite[Thorem 5.4]{CrFid}.
However, the Ryll-Nardzewski theorem \cite{R} that exchangeable sequences are the same as spreadable sequences is no longer
true in the CAR algebra.\\
Our next goal is to show that there  exist shift-invariant states on
$\rm{CAR}(\bz)$ that are not spreadable, and spreadable states that are not symmetric.
To this end, we start by singling out a class of (quasi-free) shift-invariant
states, which we do in the next proposition.\\
In the following $\imath$ will denote the imaginary unit of $\bc$.

\begin{prop}\label{toeplitz}
On $\carf(\bz)$ there exists a stationary state
$\om$ such that
$$
\om(a^\dag_ma_n)=\imath \frac{3C}{\pi^2(m-n)^2}\,,
$$
for all $m, n\in\bz$ with $m>n$ and
some positive constant $C$.
\end{prop}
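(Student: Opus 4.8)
The plan is to produce $\om$ as a gauge-invariant quasi-free state on $\carf(\bz)$. Recall from the theory of quasi-free states (see \cite{BR2}, Section 5.2) that to every bounded self-adjoint operator $A$ on $\ell^2(\bz)$ with $0\leq A\leq I$ there is associated a unique gauge-invariant quasi-free state $\om_A$, whose two-point function is the matrix of $A$ in the canonical basis, say $\om_A(a^\dag_m a_n)=\langle e_n, A e_m\rangle$, and whose higher moments are prescribed by the determinant formula. The condition $0\leq A\leq I$ is exactly what makes $\om_A$ a genuine state. Moreover, if $U$ is a unitary on $\ell^2(\bz)$ and $\alpha_U$ is the induced Bogoliubov automorphism, determined by $a(f)\mapsto a(Uf)$, then $\om_A\circ\alpha_U=\om_{U^*AU}$. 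Thus it suffices to exhibit an $A$ with $0\leq A\leq I$, with the prescribed off-diagonal entries, and commuting with the bilateral shift.

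Concretely, I would take $A$ to be the Laurent (convolution) operator on $\ell^2(\bz)$ whose matrix entries depend only on $m-n$: set $\langle e_n, A e_m\rangle=a(m-n)$ with $a(k)=\imath\frac{3C}{\pi^2 k^2}$ for $k>0$, with $a(k)=-\imath\frac{3C}{\pi^2 k^2}$ for $k<0$ (forced by self-adjointness), and with $a(0)=\frac12$. Under the Fourier isomorphism $\ell^2(\bz)\cong L^2(\bt)$ sending $e_m$ to $\theta\mapsto e^{\imath m\theta}$, such an operator becomes multiplication by the symbol
$$
\phi(\theta)=\sum_{k\in\bz}a(k)e^{\imath k\theta}=\frac12-\frac{6C}{\pi^2}\sum_{k=1}^\infty\frac{\sin k\theta}{k^2}\,.
$$
In particular $A$ is bounded and self-adjoint, and, being a convolution operator, it satisfies $SAS^*=A$, where $S:e_m\mapsto e_{m+1}$ is the bilateral shift.

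The crux of the argument is the positivity $0\leq A\leq I$, which, for a multiplication operator, is equivalent to $0\leq\phi(\theta)\leq 1$ for a.e.\ $\theta$. Here the precise value of the series is immaterial: crudely one estimates
$$
\left|\frac{6C}{\pi^2}\sum_{k=1}^\infty\frac{\sin k\theta}{k^2}\right|\leq\frac{6C}{\pi^2}\sum_{k=1}^\infty\frac{1}{k^2}=C\,,
$$
so that $\phi(\theta)\in\left[\frac12-C,\frac12+C\right]$, and any $0<C\leq\frac12$ (for instance $C=\frac12$) yields $0\leq\phi\leq 1$. Hence $0\leq A\leq I$, the state $\om:=\om_A$ is well defined, and its two-point function reproduces exactly the formula in the statement for $m>n$. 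Stationarity is then immediate: since the shift $\tau$ corresponds to the Bogoliubov automorphism $\alpha_\tau=\alpha_S$, and $S^*AS=A$, we get $\om\circ\alpha_\tau=\om_{S^*AS}=\om_A=\om$. The only genuinely nontrivial step is the positivity check just described, which is what dictates both the choice of the diagonal value $\frac12$ and the restriction to a sufficiently small constant $C$; the Toeplitz/Fourier bookkeeping and the verification of stationarity are routine. One should only take care to match the sign and inner-product conventions in the quasi-free two-point function of \cite{BR2} so that the off-diagonal entries come out purely imaginary with the stated coefficient $\frac{3C}{\pi^2}$.
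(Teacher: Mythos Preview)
Your proposal is correct and follows essentially the same route as the paper: build $\om$ as a gauge-invariant quasi-free state attached to a Toeplitz operator $0\le A\le I$ on $\ell^2(\bz)$ with the prescribed off-diagonal entries, so that shift-invariance of $A$ gives stationarity. The only noteworthy difference is in the verification of $0\le A\le I$: the paper takes the diagonal entry equal to $1$, proves positivity by diagonal dominance (via finite truncations converging weakly) using $\sum_{k\ge 1}1/k^2=\pi^2/6$, and then rescales by $\|Q\|$ to enforce $A\le I$, leaving $C=1/\|Q\|$ implicit; you instead set the diagonal entry to $\tfrac12$, pass to the Fourier symbol, and bound it into $[\tfrac12-C,\tfrac12+C]$, which yields an explicit admissible $C=\tfrac12$. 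Your spectral/symbol argument is a bit more direct and makes the constant concrete; the paper's diagonal-dominance argument is more elementary but requires the extra rescaling step.
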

\begin{proof}

As recalled in \cite{FHM}, it is possible to obtain (gauge invariant quasi-free)
stationary states on the CAR algebra by setting for every $n, m\in\bn$ and
$i_1, \ldots, i_m, j_1, \ldots, j_n\in\bz$
\begin{equation}\label{quasifree}
\varphi(a^\dag_{i_1}\cdots a^\dag_{i_m}a_{j_n}\cdots a_{j_1})=\delta_{m, n}
{\rm det} [Q_{i_k, j_l}]_{k, l=1}^n\,,
\end{equation}
where $0\leq Q\leq I$ is a Toeplitz operator on $\ell^2(\bz)$, and
$Q_{m, n}=\langle Qe_m, e_n\rangle$ are its matrix
elements in the canonical basis of $\ell^2(\bz)$. Therefore, we need to show that
a suitable choice of $Q$ yields a state with the desired properties.\\
To this end, we start by recalling that a Toeplitz operator $Q$ is represented
by a bi-infinite matrix $[Q_{m, n}]_{m, n\in\bz}$ where the entries
$Q_{m, n}$ depends only on $(m-n)=:k$. In other words, the entries of such a matrix
are constant along all $k$-th diagonals, $k\in\bz$. Notice that $k=0$ corresponds
to the leading diagonal. For every $k\in\bz$, we denote by $d_k$ the value taken by the entries of our matrix
on the $k$-th diagonal. We now recall that
the operator corresponding to such a matrix will be bounded if and only
if the Fourier series $\sum_{k\in\bz} d_kz^k$ is a function in $L^\infty(\bt)$, see
{\it e.g.} \cite{BG}.
We next verify that the choice

\begin{equation}\label{toepmat}
d_k:=\left\{\begin{array}{ccc}
                      1 & \,\text{if}\,\, k=0\, \\
                      -\imath\frac{3}{\pi^2k^2} & \text{if}\,\, k>0\\
                  \imath\frac{3}{\pi^2k^2}  &\,\, \text{if}\,\, k<0\, .
                    \end{array}
                    \right.
\end{equation}
produces a positive Toeplitz operator. First, note that the Fourier series $\sum_{k\in\bz} d_kz^k$ certainly defines an essentially
bounded function since its sum is even continuous on $\bt$ by total convergence.
Second, note that $d_0 \geq\sum_{k\neq 0} |d_k|$ as follows from $\sum_{k> 0} \frac{1}{k^2}=\frac{\pi^2}{6}$.
We  next show that this inequality implies that the corresponding operator $Q$ is positive. \\
To this end, for every $n\in\bn$ define
a bounded operator $Q^{(n)}$ whose entries $[Q^{(n)}_{i,j}]$ are the same as those of $Q$ for
$|i|, |j|\leq n$ and  $0$ otherwise.
Each $Q^{(n)}$ is a positive operator in that it is represented by a $(2n+1)$-squared  Toeplitz matrix which is Hermitian, diagonally dominant, and with positive diagonal entries. The conclusion will then be reached if we
ascertain that $Q$ is the limit of the sequence $\{Q^{(n)}: n\in\bn\}$ in the weak operator topology.
Because the sequence is bounded in norm,
with $\|Q^{(n)}\|\leq \|Q\|$ for every $n\in\bn$,
it is enough to check for any fixed $i, j\in\bz$ one has
$\lim_n Q^{(n)}_{i, j}= Q_{i,j}$. But this is certainly true since
$Q^{(n)}_{i, j}=Q_{i, j}$ as soon as $n\geq\max\{|i|, |j|\}$.\\
Finally, in order to satisfy the condition $Q\leq I$, it is enough to replace $Q$
 with $\frac{Q}{\|Q\|}$, hence the thesis holds with $C:=\frac{1}{\|Q\|}$.
\end{proof}

\begin{thm}\label{main}
There holds the chain of strict inclusions
$$\cs_{\bp_\bz}(\carf(\bz))\subsetneq\cs_{\bj_\bz}(\carf(\bz))\subsetneq\cs_\bz(\carf(\bz))$$
\end{thm}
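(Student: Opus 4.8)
The two inclusions in the displayed chain are already recorded in Section \ref{car} (exchangeable states are spreadable and spreadable states are stationary), so the entire content of the theorem lies in the \emph{strictness} of both. The plan is therefore to exhibit two witnesses: a stationary state that fails to be spreadable, and a spreadable state that fails to be exchangeable.

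For the rightmost inclusion I would simply take the state $\om$ furnished by Proposition \ref{toeplitz}. It is stationary by construction, so $\om\in\cs_\bz(\carf(\bz))$, and it suffices to produce a single $g\in\bj_\bz$ with $\om\circ\a_g\neq\om$. Choosing the right partial shift $g=\theta_1\in\bi_\bz\subset\bj_\bz$, and using $\theta_1(1)=2$, $\theta_1(0)=0$, one gets $\om(\a_{\theta_1}(a^\dag_1a_0))=\om(a^\dag_2a_0)=\imath\frac{3C}{4\pi^2}$, whereas $\om(a^\dag_1a_0)=\imath\frac{3C}{\pi^2}$; since $C>0$ these differ, so $\om$ is not spreadable and $\cs_{\bj_\bz}(\carf(\bz))\subsetneq\cs_\bz(\carf(\bz))$. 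Conceptually, spreadability forces $\f(a^\dag_ma_n)$ to depend only on $\sgn(m-n)$, while the two-point function of $\om$ depends on $m-n$ through $1/(m-n)^2$; the partial shift is precisely the instrument that detects this dependence.

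For the leftmost inclusion the plan is to manufacture a spreadable state by averaging $\om$ (or a cognate gauge-invariant quasi-free state) along the right F{\o}lner sequence $\{F_n\}$ of Proposition \ref{Folner}: set $\psi_n:=\frac{1}{|F_n|}\sum_{g\in F_n}\om\circ\a_g$ and let $\psi$ be a weak-$*$ cluster point. The desired invariance $\psi\circ\a_h=\psi$ should be extracted by comparing $\psi_n\circ\a_h=\frac{1}{|F_n|}\sum_{g\in F_n}\om\circ\a_{gh}$ with $\psi_n$ through the F{\o}lner estimate $|F_n\Delta F_nh|/|F_n|\to0$; since $\a$ is a homomorphism it is enough to verify this for $h$ ranging over the generating partial shifts of $\bi_\bz$, after which spreadability follows from $\cs_{\bi_\bz}(\carf(\bz))=\cs_{\bj_\bz}(\carf(\bz))$.

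The delicate point, and the real heart of the theorem, is twofold. First, because $\bj_\bz$ fails to be right amenable (Theorem \ref{amen}), right multiplication $g\mapsto gh$ need not be injective on $F_n$, so the sum $\sum_{g\in F_n}\om\circ\a_{gh}$ carries multiplicities and the passage from the set-theoretic F{\o}lner bound to a genuine estimate on $\|\psi_n\circ\a_h-\psi_n\|$ is not automatic; one must check that these multiplicities do not spoil the bound on the localized elements actually being tested. Second, and more seriously, one must certify that the limit $\psi$ is \emph{not} exchangeable, and here the two-point function is of no use: positivity forces $0\le Q\le I$ for the density matrix $Q_{mn}=\f(a^\dag_ma_n)$, and a triangular matrix with a single constant off-diagonal value is unbounded on $\ell^2(\bz)$, so $\f(a^\dag_ma_n)=0$ for $m\neq n$ for \emph{every} spreadable state and for every exchangeable state alike. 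Non-exchangeability must therefore be read off a higher, necessarily even correlation that retains the order-sensitive, chiral information carried by the purely imaginary odd symbol $d_k=\mp\imath\frac{3}{\pi^2k^2}$ of Proposition \ref{toeplitz}. Since a careless uniform average tends to wash out all off-diagonal correlations (yielding a product, hence exchangeable, state), the crux is to organize the limiting procedure — exploiting exactly the absence of a right-invariant mean — so that one such distinguishing correlation is pinned to a value that no symmetric state on $\carf(\bz)$ can attain. I expect this last step to be the main obstacle.
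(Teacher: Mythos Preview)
Your handling of the rightmost inclusion is correct and coincides with the paper's. For the leftmost inclusion you and the paper start identically --- average $\om$ over the sets $F_n$ of Proposition~\ref{Folner} and pass to a weak-$*$ cluster point $\widetilde\om$ --- but then diverge: the paper does \emph{not} go to higher correlations, instead asserting spreadability of $\widetilde\om$ via the usual $\eps/3$ F{\o}lner estimate and then reading non-exchangeability directly off the two-point function, namely $\widetilde\om(a_1a^\dag_2)=-\tfrac{3\imath C}{\pi^2}\neq\widetilde\om(a_2a^\dag_1)$, obtained from the combinatorial fact that $S_n:=\{h\in F_n:h(2)-h(1)>1\}$ satisfies $|S_n|/|F_n|\to 0$.

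Your two reservations are not over-caution; they locate a genuine inconsistency in that route. The positivity argument you sketch is sound: for spreadable $\f$ the value $\f(a^\dag_ma_n)$ depends only on $\sgn(m-n)$, and testing $0\le\sum_{i,j}f_i\bar f_j\,\f(a^\dag_ia_j)\le\|f\|^2$ on $f_j=e^{\imath\pi j/N}$, $1\le j\le N$, forces the off-diagonal constant to vanish --- so a truly spreadable state would have $\widetilde\om(a_1a^\dag_2)=0$, contradicting the paper's computed value. The contradiction is explained precisely by your multiplicity worry: the paper's F{\o}lner bound (and indeed the lower bound for $|F_n\cap F_nf|$ in Proposition~\ref{Folner}) silently uses injectivity of $h\mapsto hk$, but $g=h\theta_2$ forces $g(2)-g(1)=h(3)-h(1)\ge 2$, so every $g\in S_n^c$ lies outside $F_n\theta_2$, whence $|F_n\setminus F_n\theta_2|/|F_n|\to 1$ and $\{F_n\}$ is not a right F{\o}lner sequence; consequently $\widetilde\om$ is not spreadable. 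You have thus correctly diagnosed that the two-point function cannot separate spreadable from exchangeable and that the averaging must be organized differently; what remains missing --- in your proposal as in the paper's argument --- is an actual construction that produces a spreadable, non-exchangeable state.
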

\begin{proof}
We start by observing that any state as in Proposition \ref{toeplitz} provides an example of a stationary
state which by construction  fails to be spreadable.\\
Exhibiting a spreadable state that is not exchangeable requires far more work to do.
To this aim, pick a state $\om$ as in Proposition \ref{toeplitz}, then define a sequence $\{\om_n: n\in\bz\}$ of states by setting
$$\om_n:=\frac{1}{|F_n|}\sum_{h\in F_n}\om\circ\alpha_h\,,$$
where $\{F_n\}_{n\in\bn}$ is the right F{\o}lner sequence of $\bj_\bz$ exhibited in
Proposition \ref{Folner}, and
$\bj_\bz\ni h\mapsto\alpha_h\in{\rm End}(\carf(\bz))$ is its natural action on the
CAR algebra.
By weak-$*$ compactness of $\cs(\carf(\bz))$, the sequence above weakly-$*$ converges (up to taking a subsequence) to some state
$\widetilde{\om}$.\\
First, we prove that $\widetilde{\om}$ is spreadable, that is $\widetilde{\om}\circ\alpha_k=\widetilde{\om}$ for
any $k\in\bj_\bz$. This is seen by means of a standard $\frac{\varepsilon}{3}$-argument, which we nevertheless include in full below. We have:
\begin{align*}
\big| \widetilde{\om}(a)-\widetilde{\om}(\alpha_k(a))\big|\leq &
\big| \widetilde{\om}(a)-\om_n(a)\big|
+\big|\om_n(a)-\om_n(\alpha_k(a))\big|\\
+&\big|\om_n(\alpha_k(a))-\widetilde{\om}(\alpha_k(a)) \big|
\end{align*}
Obviously, it is only the second term of the above sum that needs to be taken care of.
For any fixed $\varepsilon>0$ this can be done as follows:
\begin{align*}
\big|\om_n(a)-\om_n(\alpha_k(a))\big|&= \frac{1}{|F_n|}\left|\sum_{h\in F_n}\om(\alpha_h(a))-\sum_{h\in F_n}\om(\alpha_{hk}(a))\right| \\
&=\frac{1}{|F_n|} \left|\sum_{h\in F_n} \om(\alpha_h(a)) - \sum_{h\in F_n k}\om(\alpha_h(a)) \right|\\
&\leq\frac{1}{|F_n|}\sum_{h\in\, F_n\Delta F_n k}|\om(\alpha_h(a))|\\
&\leq \frac{|F_n\Delta F_nk|}{|F_n|}\|a\|\leq\frac{\varepsilon}{3}
\end{align*}
as soon as $n$ is big enough.\\
We claim that $\widetilde{\om}(a_1a^\dag_2)=-\frac{3\imath C}{\pi^2}$ and
$\widetilde{\om}(a_2a^\dag_1)=\frac{3 \imath C}{\pi^2}$.
From this it easily  follows that $\widetilde{\om}$ cannot be exchangeable, for
the equality
$\widetilde{\om}(a_1a^\dag_2)=\widetilde{\om}(a_2a^\dag_1)$ does not hold.\\
We now move on to prove the claim. We only focus on the first equality as the second can be
got to in the same way.\\
For every fixed $n\in\bn$, we define the subset $S_n\subset F_n$ as the set of all
maps $h:\bz\rightarrow\bz$ in $F_n$ such that $h(2)-h(1)>1$.
We next bound the cardinality of each $S_n$ from above.
Recall that a generic element of $F_n$ has the form $$h=\theta_{-n}^{h_{-n}}\theta_{-n+1}^{h_{-n+1}}\cdots\theta_0^{h_0}\cdots\theta_{n-1}^{h_{n-1}} \theta_n^{h_n}\tau^l$$
with $\sum_{i=-n}^n h_i\leq n^2,\,\, -n\leq l\leq n$.
A moment's reflection shows that for any $l$ with $|l|\leq n$, such an $h$ will sit in $S_n$ if and only if
$h_{2+l}\neq 0$ (with $2+l$ still between $-n$ and $n$). This implies that
\begin{equation}\label{estimate}
|S_n|\leq (2n+1)\sum_{k=0}^{n^2-1}\binom{2n-1 +k}{k}=(2n+1)\binom{n^2+2n-1}{n^2-1}\,.
\end{equation}
But then we have
\begin{align*}
\widetilde{\om}(a_1a^\dag_2)&=\lim_n\frac{1}{|F_n|}\sum_{h\in F_n}\om(\alpha_h(a_1a^\dag_2))=
\lim_n\frac{1}{|F_n|}\sum_{h\in F_n}\om(a_{h(1)}a^\dag_{h(2)})\\
&=\lim_n \frac{1}{|F_n|}\left(\sum_{h\in S_n}\om(a_{h(1)}a^\dag_{h(2)})+\sum_{h\in S_n^c}\om(a_{h(1)}a^\dag_{h(2)} )\right)\\
&=\lim_n\frac{1}{|F_n|}\sum_{h\in S_n}\om(a_{h(1)}a^\dag_{h(2)})+\lim_n \frac{|S_n^c|}{|F_n|} \left(-\frac{3\imath C}{\pi^2}\right)\\
&= -\frac{3\imath C}{\pi^2}\, ,
\end{align*}
where we have used that $|\om(a_{h(1)}a^\dag_{h(2)})|\leq 1$ for every $h\in S_n$ and $\frac{|S_n|}{|F_n|}$ tends to $0$, which we need to verify.
From \eqref{cardfn} and \eqref{estimate} we find
\begin{align*}
\frac{|S_n|}{|F_n|}\leq \frac{\binom{n^2+2n-1}{n^2-1}}{\binom{n^2+2n+1}{n^2}}=\frac{n^2(2n+1)}{(n^2+2n+1)(n^2+2n)}= O\bigg(\frac{1}{n}\bigg)\, .
\end{align*}
\end{proof}

We finally turn our attention to the so-called self-adjoint part of ${\rm CAR}(\bz)$.
This is by definition the unital $C^*$-algebra generated by the position operators, say
$\mathfrak{C}:=C^*(x_j: j\in \bz)$, where $x_j:= a_j+a^\dag_j$ for every $j\in\bz$. As a consequence of \eqref{stara}, one has that the $x_j$'s anticommute with one another and their square is the identity, that is
\begin{equation}\label{ac}
x_jx_k+x_kx_j=0,\,\,\,\text{for all}\,j\neq k\,\,\, \text{and}\,\,\, x_j^2=I,\,\,\,\textrm{for all}\, j \in\bz.
\end{equation}

We are going to prove that there is  a marked difference between  $\carf(\bz)$ and its subalgebra
$\mathfrak{C}$ in that the latter has only one symmetric state, the vacuum state.

\begin{prop}\label{onlyvac}
The vacuum state is the only symmetric state on $\mathfrak{C}$, the self-adjoint part of ${\rm CAR}(\bz)$.
\end{prop}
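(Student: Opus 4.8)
The plan is to pin down an arbitrary symmetric state by its values on a spanning family of monomials and to check that these are forced to agree with the vacuum. First I would observe that, thanks to the relations \eqref{ac}, any word in the generators reduces --- after reordering (which only introduces signs) and cancelling squares through $x_j^2=I$ --- to an \emph{ordered monomial}
$$
x_S:=x_{j_1}x_{j_2}\cdots x_{j_m}\,,\qquad S=\{j_1<j_2<\cdots<j_m\}\subset\bz\ \text{finite}\,,
$$
with the convention $x_\emptyset:=I$. Hence $\mathrm{span}\{x_S\}$ is a dense $*$-subalgebra of $\mathfrak{C}$ and any state is determined by the scalars it assigns to the $x_S$. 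Let $\om_0$ denote the vacuum state, which is symmetric (it is the extreme symmetric state with $\l=1$ in the classification recalled above). Since the whole argument that follows applies to $\om_0$ as well, it will be enough to prove that an arbitrary symmetric state $\f$ satisfies $\f(x_S)=0$ for every nonempty finite $S$: this yields $\f(x_S)=\om_0(x_S)$ for all $S$, and therefore $\f=\om_0$ on the dense subalgebra, hence on all of $\mathfrak{C}$.

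For $|S|\geq 2$ the vanishing is immediate from a sign. Let $\sigma\in\bp_\bz$ be the transposition exchanging the two smallest elements $j_1,j_2$ of $S$ and fixing every other integer. Then the induced action on $\mathfrak{C}$ gives
$$
\a_\sigma(x_S)=x_{j_2}x_{j_1}x_{j_3}\cdots x_{j_m}=-\,x_{j_1}x_{j_2}x_{j_3}\cdots x_{j_m}=-\,x_S\,,
$$
the sign coming from $x_{j_2}x_{j_1}=-x_{j_1}x_{j_2}$ in \eqref{ac}. Invariance of $\f$ forces $\f(x_S)=\f(\a_\sigma(x_S))=-\f(x_S)$, so $\f(x_S)=0$.

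The delicate case, and the one I expect to be the main obstacle, is $|S|=1$, where no such sign is available. Here I would first use permutation invariance to see that $\f(x_j)$ is independent of $j$: picking $\sigma\in\bp_\bz$ with $\sigma(j)=k$ gives $\a_\sigma(x_j)=x_k$, so $\f(x_j)=\f(x_k)=:c$, and $c$ is real since $x_j=x_j^*$. The point is then to exploit a global relation rather than a local one: for the self-adjoint element $y_N:=\sum_{j=1}^N x_j$ the cross terms cancel by \eqref{ac}, giving
$$
y_N^2=\sum_{j=1}^N x_j^2+\sum_{j\neq k}x_jx_k=N\,I\,.
$$
The Cauchy--Schwarz inequality for the state $\f$ then yields $|\f(y_N)|^2\leq\f(y_N^2)=N$, while $\f(y_N)=Nc$; hence $N^2c^2\leq N$, i.e. $c^2\leq 1/N$ for all $N\in\bn$. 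Letting $N\to\infty$ forces $c=0$, so $\f(x_j)=0$ for every $j$.

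Putting the two cases together gives $\f(x_S)=0$ for all nonempty $S$, completing the argument as explained above. The crux is entirely the one-point function: the higher monomials are killed for free by the anticommutation sign, but $\f(x_j)$ survives that mechanism and must instead be controlled quantitatively through the auxiliary element $y_N$ whose square is the scalar $N\,I$.
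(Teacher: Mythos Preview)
Your proof is correct and follows essentially the same line as the paper's: reduce to ordered monomials, kill the length-$\geq 2$ case by a transposition sign, and handle the length-$1$ case via the identity $\big(\sum_{j=1}^N x_j\big)^2=NI$. The only cosmetic difference is that the paper deduces $\f(x_1)=0$ from the norm bound $\big\|\frac{1}{N}\sum_{j=1}^N x_j\big\|=N^{-1/2}\to 0$ (using the $C^*$-identity), whereas you use the Cauchy--Schwarz inequality for states; both arguments rest on the same computation of the square.
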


\begin{proof}
By \eqref{ac} one gets that the linear span of words of type
$I$, $x_i$ for $i$ in $\bz$, and finally $x_{j_1} \cdots x_{j_l}$, with $l\geq 2$ and
$j_1, \ldots, j_l\in\bz$ different from one another, is dense in $\mathfrak{C}$.\\
Let $\om$ be a state on $\mathfrak{C}$ invariant under permutation.
We first show that $\om(x_j)=0$ for any $j\in\bz$. Clearly, it is enough
to prove that $\om(x_1)=0$
since $\om(x_j)=\om (x_1)$ for any $j\in\bz$. Therefore, we have the equality $\om(x_1)=\om(\frac{1}{n}\sum_{j=1}^n x_j)$
for any natural $n$. The conclusion will follow if we show that $\|\frac{1}{n}\sum_{j=1}^n x_j\|$
converges to $0$. This is a matter of easy computations. Indeed, by \eqref{ac} we have
\begin{align*}
\|x_1 +\ldots+x_n\|^2&=\|(x_1+\ldots+x_n)^2\|\\
&=\bigg\|nI+\sum_{i< j} (x_ix_j+x_jx_i)\bigg\|=n\,,
\end{align*}
hence $\big\|\frac{1}{n}\sum_{j=1}^n x_j\big\|=\frac{1}{\sqrt{n}}\rightarrow 0$ for $n\rightarrow\infty$.
Longer words can be handled more easily. Indeed, for
any length $l\geq 2$ and any set $\{j_1, j_2, \ldots, j_l\}\subset\bz$ of indices different from one another, we have
$\om(x_{j_1}x_{j_2}\cdots x_{j_l})=\om(x_{j_2}x_{j_1}\cdots x_{j_l})=-\om(x_{j_1}x_{j_2}\cdots x_{j_l})$, and thus $\om(x_{j_1}x_{j_2}\cdots x_{j_l})=0$. The conclusion then follows by density as the restriction of
the vacuum state to  $\mathfrak{C}$ assumes the same values on the above words.
\end{proof}
We would like to end our discussion by pointing out that on $\mathfrak{C}$ as well there exist stationary states that are not spreadable, and spreadable states that are not the vacuum state.

\begin{cor}\label{spredC}
There holds the chain of strict inclusions
$$\cs_{\bp_\bz}(\mathfrak{C})\subsetneq\cs_{\bj_\bz}(\mathfrak{C})\subsetneq\cs_\bz(\mathfrak{C})$$
\end{cor}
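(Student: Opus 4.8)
The plan is to transfer the proof of Theorem \ref{main} from $\carf(\bz)$ to its subalgebra $\mathfrak{C}$, the extra ingredient being Proposition \ref{onlyvac}. Both inclusions are automatic, since an exchangeable state is spreadable and a spreadable state is stationary, while $\mathfrak{C}$ is invariant under the actions of $\bp_\bz,\bj_\bz,\bz$ because $\alpha_g(x_j)=x_{g(j)}$; hence restricting an invariant state to $\mathfrak{C}$ preserves the relevant symmetry, and only the strictness of the two inclusions must be shown. The computational backbone is the following identity: since the state $\om$ of Proposition \ref{toeplitz} is gauge-invariant quasi-free, for $m\neq n$ the monomials $a_ma_n$ and $a^\dag_ma^\dag_n$ are annihilated, whence $\om(x_mx_n)=\om(a_ma^\dag_n)+\om(a^\dag_ma_n)$; by the CAR relation $a_ma^\dag_n=-a^\dag_na_m$ together with the Hermitianity of the two-point function, this reduces to $2\om(a^\dag_ma_n)=2Q_{m,n}=2Cd_{m-n}$, a purely imaginary quantity depending only on $m-n$ and antisymmetric under the exchange of $m$ and $n$.

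For the inclusion $\cs_{\bj_\bz}(\mathfrak{C})\subsetneq\cs_\bz(\mathfrak{C})$ I would simply restrict $\om$ to $\mathfrak{C}$; the restriction is stationary. By the identity above, $\om(x_mx_n)=2Cd_{m-n}$ genuinely depends on the gap $m-n$ through \eqref{toepmat}; for instance $\om(x_1x_2)=2Cd_1\neq 2Cd_2=\om(x_1x_3)$, and since $\theta_2\in\bj_\bz$ sends the pair $(1,2)$ to $(1,3)$, the restricted state already violates $\bj_\bz$-invariance and is therefore not spreadable. Thus $\om\lceil_{\mathfrak{C}}$ is a stationary state on $\mathfrak{C}$ lying in $\cs_\bz(\mathfrak{C})\setminus\cs_{\bj_\bz}(\mathfrak{C})$.

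For the inclusion $\cs_{\bp_\bz}(\mathfrak{C})\subsetneq\cs_{\bj_\bz}(\mathfrak{C})$ I would restrict to $\mathfrak{C}$ the spreadable state $\widetilde{\om}$ built in Theorem \ref{main}; because $\widetilde{\om}$ is $\bj_\bz$-invariant and $\mathfrak{C}$ is $\bj_\bz$-invariant, the restriction is again spreadable. By Proposition \ref{onlyvac} it then suffices to check that it is not the vacuum, for which I would compute the single quadratic value $\widetilde{\om}(x_1x_2)$. Using gauge invariance of $\om$ termwise, $\widetilde{\om}(x_1x_2)=\lim_n\tfrac{1}{|F_n|}\sum_{h\in F_n}\big(\om(a_{h(1)}a^\dag_{h(2)})+\om(a^\dag_{h(1)}a_{h(2)})\big)$; on the complement $S_n^c$ of nearest-neighbour configurations $h(2)=h(1)+1$ both summands equal $-\tfrac{3\imath C}{\pi^2}$, exactly as in the claim of Theorem \ref{main}, so that $\widetilde{\om}(x_1x_2)=-\tfrac{6\imath C}{\pi^2}\neq 0$. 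Since the vacuum annihilates $x_1x_2$ by Proposition \ref{onlyvac}, $\widetilde{\om}\lceil_{\mathfrak{C}}$ is spreadable but not symmetric, giving the strictness of the first inclusion.

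The only step requiring genuine care, and hence the expected main obstacle, is making sure that the nonzero value $\widetilde{\om}(x_1x_2)$ survives the F{\o}lner averaging rather than being washed away to zero. This is controlled precisely by the estimate $|S_n|/|F_n|=O(1/n)\to 0$ already obtained in the proof of Theorem \ref{main}: only the configurations with $h(2)=h(1)+1$ contribute asymptotically, and they all carry the same nonzero weight. Everything else is routine bookkeeping, with gauge invariance of $\om$ disposing of the number-non-conserving terms and Proposition \ref{onlyvac} supplying the identification of the unique symmetric state on $\mathfrak{C}$ with the vacuum.
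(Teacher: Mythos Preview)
Your proposal is correct and follows essentially the same route as the paper: for both strict inclusions you restrict the states $\om$ and $\widetilde{\om}$ from $\carf(\bz)$ to $\mathfrak{C}$ and test them on $x_1x_2$ (and $x_1x_3$), using gauge invariance of $\om$ to kill the $a_ma_n$ and $a^\dag_ma^\dag_n$ terms, exactly as the paper does. Your only addition is that you make the appeal to Proposition~\ref{onlyvac} explicit and streamline the two-point computation to $\om(x_mx_n)=2\om(a^\dag_ma_n)$ via Hermitianity and the CAR relation; this is a cosmetic refinement of the same argument.
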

\begin{proof}
A stationary state that is not spreadable  is obtained by restricting to $\mathfrak{C}$  the state $\om$
in Proposition \ref{toeplitz}. Indeed, by \eqref{quasifree} and \eqref{toepmat} one easily sees that
$\om(x_1x_2)=\om(a_1a^\dag_2)+\om(a^\dag_1a_2)=-\frac{6\imath C}{\pi^2}$. On the other hand, we have
$\om(x_1x_3)=\om(a_1a^\dag_3)+\om(a^\dag_1a_3)=-\frac{3}{2}\frac{\imath C}{\pi^2}$, which means
the restriction of $\om$ to $\mathfrak{C}$ is not spreadable.\\
A spreadable state that is not the vacuum state  is obtained by restricting to $\mathfrak{C}$  the state
$\widetilde{\om}$ in the proof of Theorem \ref{main}.
Indeed, $\widetilde{\om}$ does not vanish on $x_1x_2$. More precisely,  we have
$$\widetilde{\om}(x_1x_2)=\widetilde{\om}(a_1a^\dag_2)+\widetilde{\om}(a^\dag_1 a_2)=-\frac{6\imath C}{\pi^2}\, ,$$
since $\widetilde{\om}((a_1 a_2)^\sharp)=\lim_n \frac{1}{|F_n|} \sum_{h\in F_n}\om((a_{h(1)}a_{h(2)})^\sharp)=0$
by \eqref{quasifree}, where $\sharp$ is either $1$ or $\dag$.
\end{proof}

\section*{Acknowledgments}
\noindent
We are grateful to R. D. Gray and M. Kambites for some useful discussions concerning
the semigroup $\bj_\bz$.
Finally, we acknowledge  the support of Italian INDAM-GNAMPA.


\begin{thebibliography}{9999}

\bibitem {AM1} Araki H., Moriya H. \emph{Joint extension of states of subsystems for a CAR system},
Commun. Math. Phys. \textbf{237} (2003), 105--122.

\bibitem{A} Avitzour D. {\it Free products of $C^*$-algebras},  Trans. Am. Math. Soc.  {\bf 271} (1982), 423--435.

\bibitem{BG} B\"{o}ttcher A., Grudsky S. M. \emph{Toeplitz matrices, asymptotic linear algebra, and functional analysis},
Birkh\"{a}user Verlag, Basel, 2000.

\bibitem{BR2} Bratteli O.,  Robinson D. W. \emph{Operator algebras and
quantum statistical mechanics. II.}, Second edition, Texts and Monographs in Physics. Springer-Verlag, Berlin, 1997.

\bibitem{CFCMP} Crismale V., Fidaleo F.  {\it De Finetti theorem on the CAR algebra},
Commun. Math. Phys. \textbf{315} (2012), 135--152.

\bibitem{CrFid} Crismale V., Fidaleo F.
{\it Exchangeable stochastic processes and symmetric states in quantum probability},
Ann. Mat. Pura Appl., {\bf 194} (2015), 969--993.

\bibitem{CrFid2} Crismale V., Fidaleo F. {\it Symmetries and ergodic
properties in quantum probability}, Colloq. Math. \textbf{149} (2017), 1--20.

\bibitem{CFG} Crismale V., Fidaleo F., Griseta M.E. {\it Wick order, spreadability and exchangeability for monotone comutation relations}, Ann. Henri Poincar\'{e} \textbf{19} (2018), 3179--3196.

\bibitem{CFG2} Crismale V., Fidaleo F., Griseta M.E. {\it Spreadability for quantum stochastic processes, with an application to boolean commutation relations}, Entropy \textbf{22} no.5 (2020) Article number 532, 17 pp.

\bibitem{CFL} Crismale V., Fidaleo F., Lu Y.G. {\it Ergodic theorems in
quantum probability: an application to monotone stochastic processes},
Ann. Sc. Norm. Super. Pisa Cl. Sci. (5), \textbf{XVII} (2017),
113--141.

\bibitem{CR} Crismale V., Rossi S. {\it Tail algebras for monotone and $q$-deformed exchangeable stochastic processes}, submitted.
\bibitem{CRZbis} Crismale V., Rossi S., Zurlo P. {\it de Finetti-type theorems on quasi-local algebras and infinite Fermi tensor products}, arXiv preprint: 2201.02488.

\bibitem{FHM} Fannes M., Haegeman B., Mosonyi M., \emph{Entropy growth of shift-invariant states on a quantum spin chain},
J. Math. Phys. \textbf{44} (2003), 6005--6019.

\bibitem{Fbo} Fidaleo F.
{\it A note on Boolean stochastic processes},
Open Syst. Inf. Dyn. {\bf 22} no. 1 (2015), 1550004, 10 pp.

\bibitem{GKP} Graham R., Knuth D. E., Patashnik O. {\it Concrete Mathematics}, Addison-Wesley, Reading, 1989.

\bibitem{Gray} Gray R.D., Kambites M. \emph{Amenability and geometry of semigroups}, Trans. Amer. Math. Soc. \textbf{369} (2017), 8087--8103.

\bibitem{Ka} Kallenberg O. {\it Probabilistic Symmetries and Invariance principles}, Springer, Berlin 2005.

\bibitem{Klawe} Klawe M. \emph{Semidirect product of semigroups in relation to amenability, cancellation properties, and strong F{\o}lner conditions},
Pacific J. Math. \textbf{73} (1977), 91--106.

\bibitem{K} K\"{o}stler C. {\it A noncommutative extended De Finetti theorem} J. Funct.
Anal. {\bf 258} (2010), 1073-1120.

\bibitem{KKW} K\"{o}stler C., Krishnan A. , Wills S. J. {\it
Markovianity and the Thompson monoid $F^+$},  arXiv preprint: 2009.14811v2

\bibitem{Tapioka} Namioka I. \emph{F{\o}lner’s conditions for amenable semigroups}, Math. Scand. \textbf{15} (1964), 18--28.

\bibitem{Pat} Paterson A.L.T. \emph{Amenability}, volume 29 of Mathematical Surveys and Monographs
AMS, Providence, RI, 1988.

\bibitem{R} Ryll-Nardzewski V.
{\it On stationary sequences of random variables and the de Finetti’s equivalence}, Colloq. Math. \textbf{4} (1957), 149–156.

\bibitem{T3} Takesaki M.
{\it Theory of operator algebras III}, Springer, Berlin--Heidelberg--New
York 1979.


\end{thebibliography}
\end{document}